\newtheorem{lemma}{Lemma}[section]
\newtheorem{example}{Example}[section]
\def\Box{\diamond}
\newenvironment{proof}{\begin{trivlist}
                       \item[]{\bf Proof.}
                       \hspace{0cm}}{\hfill $\Box$
                       \end{trivlist}}
                     {\end{enumerate}}
\newcounter{statement}
\newcounter{algorithm}
\newcounter{secalgorithm}[section]
\newcounter{subalgorithm}[subsection]
{\catcode`\;=\active%
\gdef\pointvirgule{\catcode`\;=\active%
\def;{\unskip\kern 2pt\string;~~~\ignorespaces}}}
\def\refstepalgorithms{\stepcounter{subalgorithm}\stepcounter{secalgorithm}
\refstepcounter{algorithm}}
{\end{tabbing}}
\newenvironment{algorithm*}[1]{\pointvirgule\setcounter{statement}{0}
\def\N>{\stepcounter{statement}\thestatement\>}\def\B>{\hskip0.1em\vrule\>}
\def\[##1]{{{##1}}}
\begin{tabbing}
++\=++\=++\=++\=++\=++\=++\=++\=++\=++\kill
{{\sc Algorithm}} : {#1}\\*[1mm]}%
{\end{tabbing}}
\newenvironment{falgorithm*}[1]{\pointvirgule\setcounter{statement}{0}
\def\N>{\stepcounter{statement}\thestatement\>}\def\B>{\hskip0.1em\vrule\>}
\def\[##1]{{{##1}}}
\begin{tabbing}
++\=++\=++\=++\=++\=++\=++\=++\=++\=++\kill
}%
{\end{tabbing}}
\newenvironment{*algorithm*}{\pointvirgule\setcounter{statement}{0}
\def\N>{\stepcounter{statement}\thestatement\>}\def\B>{\hskip0.1em\vrule\>}
\def\[##1]{{\bf##1}}
\begin{tabbing}++\=++\=++\=++\=++\=++\=++\=++\=++\=++\kill}{\end{tabbing}}
\newenvironment{ualgorithm*}[1]{\pointvirgule\def\B>{\hskip0.1em\vrule\>}
\def\[##1]{{\bf##1}}
\begin{tabbing}
++\=++\=++\=++\=++\=++\=++\=++\=++\=++\=\kill
\underline{{\sc Algorithm}}: {#1}\\*[1mm]}%
{\end{tabbing}}
\def\@comment[#1]#2{$\{$\hbox to #1{#2 \dotfill}$\}$}
\def\comment{\@ifnextchar[{\@comment}{\@comment[10.5cm]}}
\def\eps{\ifmmode\epsilon\else$\epsilon$\fi}
\def\veps{\ifmmode\varepsilon\else$\varepsilon$\fi}
\newsavebox\commentbox
\def\IR{\mathbb{R}}
\def\IC{\mathbb{C}}
\def\og{\leavevmode\raise.3ex\hbox{$\scriptscriptstyle\langle\!\langle$}}
\def\fg{\leavevmode\raise.3ex\hbox{$\scriptscriptstyle\rangle\!\rangle$}}
\def\romain#1{\uppercase\expandafter{\romannumeral#1}}
\def\trait{\hskip 0.5mm \raise -6pt\vbox{\vrule height 18pt}\>}
\def\traitl{\hskip 0.5mm \raise -11pt\vbox{\vrule height 25pt}\>}
\def\hfl#1#2{\smash{\mathop{\hbox to 20mm {\rightarrowfill}}
\limits^{\scriptstyle #1}_{\scriptstyle #2}}}
\def\SS#1#2{{#1}\kern .15em{#2}}
\def\into#1{\S\kern .15em\ref{#1}}
\def\mycaption#1#2#3{\par\@bsphack\vbox{\def\@captype{#1}\caption{#3}}
{\if@filesw {\xdef\@gtempa{\write\@auxout{\string\newlabel{#2}
{{\csname p@#1\endcsname\csname the#1\endcsname}{\thepage}}}}}}
\@gtempa \if@nobreak\ifvmode\nobreak\fi\fi\fi\@esphack}
\def\@@hcaption[#1]#2#3{\par\@bsphack\vbox{\def\@captype{#1}\caption{#3}}
{\if@filesw{\xdef\@gtempa{\write\@auxout{\string\newlabel{#2}
{{\csname p@#1\endcsname\csname the#1\endcsname}{\thepage}}}}}}
\@gtempa\if@nobreak\ifvmode\nobreak\fi\fi\fi\@esphack}
\def\@hcaption#1#2{\ifnum\@hcaptype=0\@@hcaption[figure]{#1}{#2}
\else\@@hcaption[table]{#1}{#2}\fi}
\def\hcaption{\@ifnextchar[{\@@hcaption}{\@hcaption}}
{\begin{trivlist}\@hcaptype=0\centering\item[]}%
{\end{trivlist}}
{\begin{trivlist}\@hcaptype=1\centering\item[]}%
{\end{trivlist}}
\def\det{{\rm det}}
\def\Arg{{\rm Arg}}
  \thanks[sfn]{This work was pursued within the team MOMAPPLI of the LIRIMA (Laboratoire International de Recherche en Informatique et Mathématiques Appliquées).}%
\thanks{Centre INRIA de Rennes Bretagne Atlantique}%
\begin{document}
\RRNo{7770}
\makeRR   

\newpage
\section{Introduction}
\indent The localization of eigenvalues of a given matrix $A$ in a
domain of the complex plane is of interest in
scientific applications. When the matrix is real symmetric or complex hermitian, a procedure based on computations of Sturm sequences allows to safely apply bisections on real intervals to localize the eigenvalues. The problem is much harder for non symmetric or non hermitian matrices and especially for non normal ones. This last case is the main concern of this work. Proceeding by trying to compute the
eigenvalues of the matrix may not always be appropriate for two
reasons.

First most of the iterative methods
frequently used to calculate eigenvalues of large and sparse matrices may loose some of them, since only a part of the
spectrum is computed, and as such there is no guarantee to localize all the eigenvalues of the selected domain. When a shift-and-invert transformation is used, the eigenvalues are obtained in an order more or less dictated by their distance from the shift, and if one eigenvalue is skipped, there is no easy strategy that
allows to recover it.

Second
the entries of the matrix may be given with some errors and then the eigenvalues can only be localized in domains of $\IC$.

Many authors  have defined regions in the complex plane that include the eigenvalues of a given matrix. One of the main tool is the Gershgorin theorem. Since a straight application of the theorem often leads to large disks, some authors extended the family of inequalities for obtaining smaller regions by intersections which include eigenvalues (see e.g., \cite{BrMe94,HuZhSh06}). Other techniques consist to consider bounds involving the singular values (see e.g., \cite{Beattie2003281}), the eigenvalues of the hermitian part and the skew-hermitian part of the matrix (see e.g., \cite{AdTs06}), or the field of values of inverses of the shifted matrices (see e.g., \cite{HSZ08}).

For taking into account, possible perturbations of the matrix, Godunov\cite{Godunov92} and
Trefethen \cite{Tref92} have separately defined the notion of
the of $\epsilon$-spectrum or pseudospectrum of a matrix to address the problem.
The problem can then be
reformulated as that of determining level curves of the 2-norm of the resolvent $R(z)=(zI-A)^{-1}$ of the matrix $A$.

The previous approaches determine a priori enclosures of the eigenvalues. A dual approach can be considered: given some curve $(\Gamma)$ in the complex plane, count the number of eigenvalues of the matrix $A$ that are surrounded by $(\Gamma)$. This problem was considered in \cite{BerPhi01} where several procedures were proposed.
In this paper, we make some progress with respect to the work in \cite{BerPhi01}. Our procedure is based on the application of the residual theorem: the integration process leads to the evaluation of the principal argument of the logarithm of the function $g(z)=\det((z+h)I-A)/\det(zI-A) $. This function is also considered  in \cite{Bind08} to count the eigenvalues when a nonlinear eigenvalue problem is perturbed.

This work is mainly concerned with the control of the integration path so as to stay on the same branch along an interval when evaluating the principal argument of a logarithm.

In section \ref{sec:2}, we present
the mathematical tools. In section \ref{sec:3}, we present the basis of our strategy for following a branch of the logarithm function and conditions for controlling the path length. Section \ref{sec:4} deals with the implementation of our method: we show how to safely compute the determinant and how to include new points along the boundary. In section \ref{sec:5} we present numerical test results carried out on some test matrices and  in section \ref{sec:6}, we conclude with some few remarks and future works.

\section{Mathematical tool and previous works}
\label{sec:2}
In this section we present the Cauchy's argument principle and some previous works on counting eigenvalues in regions of the complex field.

\subsection{Use of the argument principle}
The localization of the eigenvalues of matrix $A$ involves the calculation of determinants. Indeed let $(\Gamma)$ be a closed piecewise regular Jordan curve (piecewise $C^{1}$ and of winding number 1) in the complex plane which does not include eigenvalues of $A$. The number $N_{\Gamma}$ of eigenvalues surrounded by $(\Gamma)$ can be expressed by the Cauchy formula (see e.g., \cite{Rudi70,Silverman}):
\begin{eqnarray}\label{cauchy}
    N_{\Gamma}&=& \frac{1}{2i\pi} \int_{\Gamma}{\frac{f^{'}(z)}{f(z)}dz},
\end{eqnarray}
where $f(z)=det(zI - A)$ is the characteristic polynomial of A. \newline \newline
If $\gamma (t)_{0\leq t \leq 1}$ is a parametrization of $\Gamma$ the equation (\ref{cauchy}) can be rewritten as
\begin{equation}
    N_{\Gamma} = \frac{1}{2i\pi}\int_{0}^{1} \frac{ f^{'} (\gamma (t))}{f (\gamma (t))} \gamma^{'} (t) dt .
\end{equation}
The primitive $\varphi$ defined by
\begin{equation*}
    \varphi (u) = \int_{0}^{u}\frac{ f^{'} (\gamma (t))}{f (\gamma (t))} \gamma^{'} (t) dt, u\in [0, 1],
\end{equation*}
is a continuous function which is a determination of
$\log (f \circ \gamma )$ (e.g. see \cite{Silverman}):
$$\log f( \gamma (t))= log \left|f( \gamma (t)) \right| + i \ arg (f( \gamma (t))),
\ \  t \in [0,1].$$
It then follows that
    $$N_{\Gamma} = \frac{1}{2\pi}\varphi_{I} (1),$$
where $\varphi _{I} (1)$ is the imaginary part of $\varphi (1)$ since its real part vanishes.

\subsection{Counting the eigenvalues in a region surrounded by a closed curve}
In \cite{BerPhi01}, two procedures were proposed for counting the
eigenvalues in a domain surrounded by a closed curve.

The first
method is based on the series expansion of $\log (I+h R(z))$, where $R(z)=(zI-A)^{-1}$, combined
with a path following technique.  The method uses a predictor - corrector scheme with constant step size satisfying the constraint
$$\left| \varphi _{I} (z+\Delta z)- \varphi _{I}(z) \right | < \pi,$$
for a discrete list of points $z$.
The implementation of the algorithm requires the computation of a few of the smallest singular values and the corresponding left and right singular vectors of $(zI -A)$; they are used to follow the tangent to the level curve of the smallest singular value of $(zI -A)$.

In the
second procedure, the domain is surrounded by a parameterized
user-defined curve $z=\gamma (t)$ and thus
\begin{equation}
    N_{\Gamma} = \frac{1}{2i\pi}\int_{\gamma (0)}^{\gamma (1)} \frac{\frac{d}{dt} det (\gamma (t)I-A)}{det (\gamma (t)I-A)}dt
\end{equation}
Since $\gamma (0)= \gamma (1)$, the function $\gamma (t)$ defined on $[0,1]$, can be extended onto $\IR$ by $$ \gamma_{ext}(t)=\gamma (t \text{ mod } 1).$$
By subdividing the interval $[ \gamma (0) , \gamma (1)]$ into
subintervals of equal length, and by assuming that $\gamma_{ext} \in {\cal C}^{\infty}$, they make use of a fundamental result from quadrature of periodic function to prove an exponential convergence of the integral.  The method is compared to other integrators with adaptive step sizes.

Each of these methods makes use of the computation of
$$u(t) = \frac{ det (\gamma (t)I-A)}{\left|det (\gamma (t)I-A)\right|},$$
which is efficiently computed through a LU factorization of the matrix $ ( \gamma (t)I-A)$ with partial
pivoting. In order to avoid underflow or overflow, the quantity is computed by
$$\frac{ det (\gamma (t)I-A)}{\left| det (\gamma (t)I-A)\right|}=\prod_{i=1}^{n}\frac{u_{ii}}{\left|u_{ii}\right|}$$
where $u_{ii}$ is the i-th diagonal element of U in the LU
factorization.  The product is computed using the procedure
that will be described later on in section \ref{sec:4}.

Our work, which can be viewed as an improvement of \cite{BerPhi01}, mostly deals with the control of the integration so as to
stay on the same branch along an interval, during the evaluation of the principal argument of the logarithm of the function g(z) defined in the introduction.

\section{Integrating along a curve}
\label{sec:3}
In this section, we describe strategies for the integration of the function
$g(z)=\frac{f^{'}(z)}{f(z)}$, where $f(z)=det(zI-A)$,
along the boundary of a domain limited by a
user-defined curve $(\Gamma)$ that does not include eigenvalues of $A$.

\subsection{Following a branch of log(f(z)) along the curve}
To simplify the presentation and without loss of generalization, let us assume that $ \Gamma = \bigcup_{i=0}^{N-1}{[z_i,z_{i+1}]} $ is a polygonal curve.

Let $\Arg(z)\in (- \pi , \pi]$
denote the principal determination of the argument of a complex number $z$, and
$\arg(z)\equiv \Arg(z) \ \ (2\pi) $, be any determination of the argument of $z$.
In this section, we are concerned with the problem of following a branch of $\log(f(z))$ when $z$ runs along $(\Gamma )$.
The branch (i.e. a determination $\arg_{0}$ of the argument), which is to be followed along the integrating process, is fixed by selecting an origin $z_{0} \in (\Gamma)$ and by insuring
\begin{equation}
\label{eq:branch0}
\arg_{0} (f(z_{0})) = \Arg(f(z_{0})).
\end{equation}
Let $z$ and $z+h$ two points of $(\Gamma)$. Since
\begin{eqnarray*}
(z+h)I-A &=&(zI-A)+hI \\
         &=& (zI-A)(I+hR(z)),
\end{eqnarray*}
where $R(z) = (zI-A)^{-1}$, it then follows that
\begin{equation}
\label{eq:fzh}
f(z+h)=f(z) \  \det(I+hR(z)).
\end{equation}
Let $\Phi_{z}(h) = \det(I+hR(z))$, then
\begin{eqnarray*}
\int_{z}^{z+h}\frac{ f^{'} (z)}{f(z)}dz &=& \log(f(z+h))- \log(f(z)) \\
&=& \log \left( \frac {f(z+h)}{f(z)}\right) \\
&=& \log (\Phi_{z}(h)) \\
&=&\log \left| \Phi_{z}(h) \right|+i \arg(\Phi_{z}(h)).
\end{eqnarray*}
In the previous approach \cite{BerPhi01}, given $z$, the step  $h$ is chosen such that condition
\begin{equation} \label{eq:phiargcond}
\left| \arg(\Phi_{z}(h)) \right| < \pi,
\end{equation}
is satisfied. In \cite{BerPhi01} condition $(\ref{eq:phiargcond})$ is only checked at point $z+h$ but we want the condition to be satisfied at all the points
$s \in [z,z+h]$, so as to guarantee that we stay on the same branch along the interval $[z,z+h]$.  We need a more restrictive condition
which is mathematically expressed by the following lemma:
\begin{lemma}[Condition (A)] \label{lema1}
Let $z$ and $h$ be such that $[z,z+h]\subset (\Gamma) .$ \\
If
\begin {equation}\label{condA}
\left| \Arg( \Phi_{z}(s))\right| < \pi, \ \ \forall s \in [0,h],
\end{equation}
then,
\begin{equation}
\label{eq:integr}
\arg_{0}(f(z+h))=\arg_{0}(f(z))+\Arg(\Phi_{z}(h)),
\end{equation}
where $\arg_{0}$ is the determination of the argument determined as in (\ref{eq:branch0}) by an a priori given $z_{0} \in (\Gamma)$.
\end{lemma}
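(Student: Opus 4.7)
The plan is to exploit the factorization $f(z+s) = f(z)\,\Phi_z(s)$ from equation (\ref{eq:fzh}) and then use uniqueness of continuous lifts of the argument.

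First I would observe that, by the very construction of $\arg_0$ (continuous extension along $\Gamma$ from the value $\Arg(f(z_0))$ at $z_0$), the map $s \mapsto \arg_0(f(z+s))$ is a continuous function on $[0,h]$. Combining this with $f(z+s) = f(z)\,\Phi_z(s)$, the function
\begin{equation*}
\theta(s) \;=\; \arg_0(f(z+s)) - \arg_0(f(z)), \qquad s \in [0,h],
\end{equation*}
is a continuous determination of $\arg(\Phi_z(s))$ on $[0,h]$, with $\theta(0) = 0$ because $\Phi_z(0) = \det(I) = 1$.

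Next I would show that $s \mapsto \Arg(\Phi_z(s))$ is also a continuous determination of $\arg(\Phi_z(s))$ on $[0,h]$, starting at the same value $0$. Two points must be checked: that $\Phi_z(s)$ never vanishes on $[0,h]$, and that it stays away from the cut of the principal argument. The first point holds because $[z,z+h]\subset(\Gamma)$ contains no eigenvalue of $A$, so $f(z+s) \neq 0$ and hence $\Phi_z(s) = f(z+s)/f(z) \neq 0$. The second point follows directly from hypothesis (\ref{condA}): the strict inequality $|\Arg(\Phi_z(s))| < \pi$ means $\Phi_z(s)$ never lies on the nonpositive real axis, and on $\mathbb{C}\setminus(-\infty,0]$ the principal argument $\Arg$ is continuous. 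Composing with the continuous map $s \mapsto \Phi_z(s)$ yields continuity of $s \mapsto \Arg(\Phi_z(s))$.

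Finally I would invoke the standard uniqueness of continuous argument lifts: two continuous real-valued determinations of $\arg$ of a fixed continuous non-vanishing complex function on a connected interval that agree at one point must differ by a constant integer multiple of $2\pi$; hence, agreeing at $s=0$, they are identical. Therefore $\theta(s) = \Arg(\Phi_z(s))$ for every $s \in [0,h]$, and specializing to $s = h$ gives precisely the claimed formula (\ref{eq:integr}). The only delicate step is the second one, justifying that $\Arg \circ \Phi_z$ is continuous under condition (A); everything else is essentially formal once that is in place.
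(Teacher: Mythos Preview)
Your proof is correct and rests on the same idea as the paper's: both exploit that $\theta(s)=\arg_0(f(z+s))-\arg_0(f(z))$ is a continuous determination of $\arg(\Phi_z(s))$ with $\theta(0)=0$, and that under condition~(A) the principal argument $\Arg(\Phi_z(s))$ is another such determination. The paper phrases this by contradiction (if $\theta(h)$ differed from $\Arg(\Phi_z(h))$ by a nonzero multiple of $2\pi$, the intermediate value theorem would force $|\Arg(\Phi_z(s))|=\pi$ at some intermediate~$s$), whereas you argue directly via uniqueness of continuous lifts; the substance is the same.
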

\begin{proof}
We prove it by contradiction. Let us assume that there exists $k \in \mathbb{Z}\setminus \{0 \}$ such that
$$arg_{0}(f(z+h))=arg_{0}(f(z))+Arg(\Phi_{z}(h)) + 2 k\pi.$$
By continuity of the branch, there exists $s \in [0,h]$ such that $\left|Arg( \Phi_{z}(s))\right| = \pi$, which contradicts $(\ref{condA}).$
\end{proof}
Condition $(\ref{condA})$ is called {\bf Condition (A)}.

\subsection{Step size control}
In our approach, given $z$, the step $h$ is chosen such that condition of Lemma $(\ref{lema1})$ is satisfied.
{\bf Condition (A)} is equivalent to $$\Phi_{z}(s) \notin (- \infty , 0], \forall s \in [0 , h].$$
In order to find a practical criterion to insure it, we look for a more severe condition
by requiring that $\Phi_{z}(s) \in \Omega$, where $\Omega$ is an open convex set, neighborhood of $1$, and included in  $\Omega \subset \mathbb{C}\setminus(- \infty , 0]$. Possible options for $\Omega$ are
the positive real half-plane, or any disk included in it and centered in $1$.

Since $\Phi_{z}(0)=1$, let $$ \Phi_{z}(s) = 1+ \delta , \mbox{ with } \delta =\rho e^{i\theta}.$$ A sufficient condition for $(\ref{condA})$ be to satisfied is
$\rho <1$, i.e.
\begin{equation}
\left|\Phi_{z}(s) - 1 \right| <1 , \ \ \forall s \in [0,h] \label{eq:condB}
\end{equation}
This condition will be referred to as {\bf Condition (B)}, and, when only verified at $z+h$, i.e.
\begin{equation}
\left|\Phi_{z}(h) - 1 \right| <1 , \label{eq:condBprime}
\end{equation}
it will be referred to as {\bf Condition (B')}. This last condition is the condition used in \cite{BerPhi01}. It is clear that {\bf Condition (B)} implies {\bf Condition (A)} whereas this is not the case for {\bf Condition (B')}.

Since it is very difficult to check (\ref{eq:condB}), we apply the condition on the linear approximation
$\Psi_{z}(s)=1 +s \Phi_{z}^{'}(0)$ of $\Phi_{z}(s)$ at $0$.
Replacing function $\Phi_z$ by its tangent $\Psi_z$ in (\ref{eq:condB}), leads to
\begin{equation}
\left | \Psi_{z}(s) - 1 \right | <1, \ \ \forall s \in [0,h], \label{condBlinear}
\end{equation}
which is equivalent to the following condition, referred as {\bf Condition (C)}:
\begin{equation}
\left|h\right|< \frac{1}{|\Phi_{z}^{'}(0)|}. \label{condC}
\end{equation}

\begin{example}[First illustration]\label{ex:1}
Let
$ A= \left (
        \begin{array}{cc}
            0& 0 \\
            0& 1
        \end{array}
    \right ).$
It then follows that
\begin{eqnarray*}
f(z) &=& z(z-1),\\ \Phi_{z}(h)&=&(1+\frac{h}{z})(1+\frac{h}{z-1}),\\
\Phi_{z}^{'}(0)&=&\frac{1}{z}+\frac{1}{z-1}.
\end{eqnarray*}
Let us assume that we are willing to integrate along the segment from $z=2$ to $z=1+i$. In order to see if intermediate points are needed to insure that the branch of the logarithm is correctly followed, we consider the previously introduced conditions on $h=t(-1+i)$ where $t \in [0,1]$.

\begin{description}
\item [Condition (A):] $ \Phi_{2}(h)=1+\frac{3h}{2}+\frac{h^2}{2}$ is a non positive real number if and only if $h\in [-2,-1] \bigcup ( -\frac{3}{2}+i\IR )$.  From that, it can easily be seen that the segment $[0, -1+i]$ does not intersect the forbidden region. Therefore no intermediate points are needed.
\item [Condition (B):] this condition is equivalent to $|h| |3+h| < 2$.  By studying the function $\phi (t) = |h| |3+h| = \sqrt{2} t |3-t+it|$, the parameter $t$ must remain smaller than $\alpha \approx 0.566$.
\item [Condition (B'):] in this example, this condition is equivalent to the previous one, since the function $\phi (t)$ is increasing with $t$.
\item [Condition (C):] since $\Phi_{2}^{'}(h)=\frac{3}{2}+h$, this condition limits the extent of the interval to $|h|<\frac{2}{3}$ or equivalently $t < \frac{\sqrt{2}}{3} \approx 0.471$ .
\end{description}
\end{example}

In the second example, we illustrate the lack of reliability of {\bf Condition (B')}. 

\begin{example}[Second illustration] \label{ex:1bis}
Let
$ A= \lambda I_n$, where $\lambda \in \IR$ and $I_n$ is the identity matrix of order $n$.
It then follows that
\begin{eqnarray*}
f(z) &=& (z-\lambda)^n,\\ \Phi_{z}(h)&=&\left(1+\frac{h}{z-\lambda}\right)^n,\\
\Phi_{z}^{'}(0)&=&\frac{n}{z-\lambda}.
\end{eqnarray*}
Let us assume that we are willing to integrate from $z=\lambda+1$ to $z+h=\lambda+e^{i\theta}$. We consider the previously introduced conditions on $h$.
\begin{description}
\item [Condition (A):]  $|\theta | < \frac{\pi}{n}$.
\item [Condition (B):]  $|\theta| < \frac{\pi}{3n}$.
\item [Condition (B'):]  $\cos n\theta > \frac{1}{2}$ which is satisfied for values that violate {\bf Condition (A)}.
\item [Condition (C):]  $|\frac{\theta}{2}|< \arcsin\frac{1}{2n}$, which is more severe than $|\theta| < \frac{1}{n}$ and therefore guaranties {\bf Condition (A)}.
\end{description}
\end{example}
In this example, if $(\Gamma)$ is the circle with center $\lambda$ and radius $1$, the step size must be reduced in such a way that more than $2n$ intervals are considered to satisfy {\bf Condition (A)}, or even $6n$ and $2\pi n$ intervals with 
{\bf Condition (B)} and {\bf Condition (C)} respectively.

Practically, we consider that {\bf Condition (C)} implies {\bf Condition (A)}, as long as the linear approximation is valid. Problems may occur when $\Phi_{z}^{'}$ vanishes. The following example illustrates such a situation.

\begin{example}[Critical situation] \label{ex:2}
Let us consider the matrix of Example \ref{ex:1}.
For $z=1/2$, $\Phi_{1/2}(h)=1-4h^{2}$, and $\Phi_{1/2}^{'}(0)=0$, and
the conditions become
\begin{description}
\item [Condition (A):] $h \notin \IR$ or $|h| < 1/2$,
\item [Condition (B):]  $|h|<1/2$,
\item [Condition (B'):]  $|h|<1/2$,
\item [Condition (C):] is satisfied for all $h \in \IC$.
\end{description}
\end{example}

\section{Implementation}
\label{sec:4}
In this section, we describe the numerical implementation of our method.  Strategies for including new points and a procedure for safely computing the determinants are given.

\subsection{Avoiding overflows and underflows}
The implementation of our method requires the computation of $$\Phi_{z}(h) = \frac{\det((z+h)-A)}{\det(zI-A)}.$$
In order to avoid underflow or overflow, we proceed as follows: \newline \newline
For any non singular matrix $M\in \IC^{n\times n}$, let us consider its LU
factorization $PM=LU$ where $P$ is a permutation matrix of signature $\sigma$. Then $\det(M)=\sigma \prod_{i=1}^{n}(u_{ii})$ where
$u_{ii}\in \mathbb{C}$ are the diagonal entries of $U$. If the matrix $M$ is not correctly scaled, the product
$\prod_{i=1}^{n}(u_{ii})$ may generate an overflow or an underflow.
To avoid this, the determinant is characterized by the triplet $( \rho,
K, n )$ so that
\begin{equation}\label{det2}
    \det(A)=\rho K^{n}
\end{equation}
where:
\begin{eqnarray*}
\rho & = & \sigma \prod_{i=1}^{n}\frac{u_{ii}}{\left|u_{ii}\right|},
\ \ (\rho \in \mathbb{C} \mbox{ with } \left|\rho \right|=1), \mbox{ and } \\
 K &=& \sqrt[n]{\prod_{i=1}^{n}|u_{ii}|} \ \ ( K > 0).
\end{eqnarray*}
The quantity $K$ is computed through its logarithm:
$$ \log(K) = \frac{1}{n} \sum_{i=1}^n \log (|u_{ii}|). $$
By this way, the exact value of the determinant is not computed, as long as the scaling of the matrix is not adequate.

In section \ref{sec:3}, it was indicated that our algorithm will heavily be based on the computation of $\Phi_{z}(h)= \det (I+hR(z)).$
For $h$ of moderate modulus, the determinant does not overflow.  This can be verified since $$\Phi_{z}(h) = \frac{\det((z+h)I-A)}{\det(zI-A)} = \frac {K_{2}^{n} \rho_{2}}{K_{1}^{n} \rho_{1}},$$
where $\det(zI-A)$ and $\det((z+h)I-A)$ are respectively represented by the triplets $(\rho_1,K_1,n)$ and $(\rho_2,K_2,n)$.
Before raising to power $n$, to protect from under- or overflow, the ratio $K_{2}/K_{1}$ must be in the interval $[ \frac{1}{\sqrt[n]{M_{fl}}}, \sqrt[n]{M_{fl}}]$  where $M_{fl}$ is the largest floating point number.  When this situation is violated, intermediate  points must be inserted between $z$ and $z+h$.

\subsection{Estimating the derivative}

An easy computation shows that the derivative $\Phi_{z}^{'}(0)$ can be expressed by :
\begin{equation}
\label{eq:trace}
\Phi_{z}^{'}(0) = {\rm trace}(R(z)).
\end{equation}
The evaluation of this simple expression involves many operations, as we show it now. By using the LU-factorization $P(zI-A)=LU$ which is available at $z$, and by using (\ref{eq:trace}), we may compute $\Phi_{z}^{'}(0) = \sum_{i=1}^n u_i^*l_i$, where $l_i=L^{-1}e_i$ and $u_i=(U^*)^{-1}e_i$, with $e_i$ being the $i$-th column of the identity matrix. When $A$ is a sparse matrix, the factors $L$ and $U$ are sparse but not the vectors $u_i$ and $l_i$. 
Therefore, the whole computation involves $2n$ sparse triangular systems. Experiments showed that they involve more operations than the LU factorization of the matrix. Approximations of the trace of the inverse of a matrix have been investigated. 
They involve less operations than use of the LU factorization but they are only valid for symmetric or hermitian matrices \cite{Bai96,GoMe09}.

If the derivative is approximated by its first order approximation, sparsity helps. More specifically, given $z$ and $z+h$, the derivative of $\Phi_{z}$ at $0$ is estimated by
$$\Phi_{z}^{'}(0) \approx \frac{\Phi_{z}(s) - 1}{s}, $$ where $s=\alpha h$ with $\alpha= \min(10^{-6}\mu/|h|,1) ,$ and $\mu = \max_{z\in\Gamma}{\left|z\right|}$. Therefore, the computation imposes an additional LU factorization for evaluating the quantity $\Phi_{z}(s)$. It is known that, for a sparse matrix, the sparse LU factorization involves much less operations that its dense counterpart.

\subsection{Test for including new points} \label{sec:newpts}
In this subsection, we describe a heuristic procedure for including new points in the interval $[z,z+h]$. In section {\ref{sec:3},
we introduced {\bf Condition (B)} which is more severe than {\bf Condition (A)} but might be easier to verify, and we proposed to test its linear
approximation called {\bf Condition (C)}.  Unfortunately Example \ref{ex:2} has exhibited that {\bf Condition (C)} may be satisfied
while {\bf Condition (B')} and therefore {\bf Condition (B)} is violated.  To increase our confidence in accepting the point $z+h$, we
simultaneously check {\bf Condition (C)} and {\bf Condition (B')}.

When {\bf Condition (C)} is violated, we insert $M$ regularly spaced points between $z$ and $z+h$ where
\begin{equation} \label{eq:ptsinsert}
M=\min\left(\left\lceil |h| \ |\Phi_{z}^{'}(0)| \right\rceil, M_{max} \right),
\end{equation}
with $M_{max}$ being some user defined parameter.

In addition, we insist that {\bf Condition (C)} is satisfied at each bound of the segment $[z,z+h]$. Therefore, on exit, the condition $\left|h\right|< \frac{1}{|\Phi_{z+h}^{'}(0)|}$ must also be guarantied. When it is violated, we insert the point $z+h/2$ in the list.

The following example illustrates the effect of this step size control.

\begin{example} \label{ex:test1}
Let $A$ be the random matrix :
\begin{eqnarray*}
A & = & \left( \begin{array}{rrrrr}
 -0.63 & 0.80 & 0.68 & 0.71 & -0.31 \\
 -0.81 & 0.44 & -0.94 &  0.16 &  0.93 \\
 0.75 & -0.09 & -0.91 & -0.83 & -0.70 \\
 -0.83 & -0.92 &  0.03 & -0.58 & -0.87 \\
 -0.26 & -0.93 & -0.60 & -0.92 & -0.36
 \end{array} \right) .
 \end{eqnarray*}
 The polygonal line $(\Gamma)$ is determined by 10 points regularly spaced on the circle of center 0 and radius 1.3. In Figure \ref{fig:ex1}, are displayed the eigenvalues of $A$, the line $(\Gamma)$ and the points that are automatically inserted by the procedure. The figure illustrates that, when the line gets closer to some eigenvalue, the segment length becomes smaller.
\end{example}

\begin{figure}
\centering
\includegraphics[width=13cm]{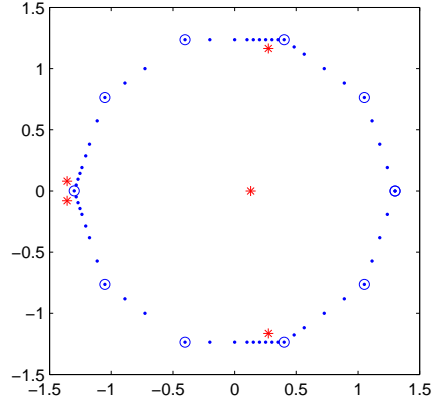}
\caption{\label{fig:ex1} Example \ref{ex:test1}. {\it The eigenvalues are indicated by the stars. The polygonal line is defined by the 10 points with circles; the other points of the line are automatically introduced to insure the conditions as specified in section \ref{sec:newpts}} ($M_{max}=1$ in (\ref{eq:ptsinsert})).}
\end{figure}

\subsection{Global algorithm}

The algorithm is sketched in Table \ref{algo:eigcnt}. From a first list $Z$ of points, it extends the list $Z$ in order to determine a safe split of the integral (\ref{cauchy}). The complexity of the algorithm is based on the number of computed determinants. For each $z \in Z$, the numbers $det(zI-A)$ and $\Phi_{z}^{'}(0)$ are computed; they involve two evaluations of the determinant. Therefore, for $N$ final points in $Z$, the complexity can be expressed by:
$$ {\cal C} = 2 {\cal L}_{LU} N, $$
where ${\cal L}_{LU}$ is the number of operations involved in the complex LU factorization of $zI-A$.

When the matrix $A$ is real and, assuming that the polygonal line $(\Gamma)$ is symmetric w.r.t. the real axis and intersects it only in two points, half of the computation can be saved since
$$  N_{\Gamma} = \frac{1}{\pi} \  {\cal I}\left( \int_{\Gamma_+}\frac{f^{'}(z)}{f(z)}dz \right),$$
where $(\Gamma_+)$ is the upper part of $(\Gamma)$ when split by the real axis, and ${\cal I}(Z)$ denotes the imaginary part of $Z$.

\begin{table}
\centering
\begin{ualgorithm*}{\sc Eigencnt}
\> \[Input] \\
\> \> $Z$=\{edges of $(\Gamma)$\} ; \\
\> \> $M_{pts}=\mbox{maximum number of allowed points};$ \\
\> \> $M_{max}=\mbox{maximum number of points to insert simultaneously} $ ;\\
\> \[Output] \\
\> \> $neg$ = number of eigenvalues surrounded by $(\Gamma)$ ; \\ \\
\> Status($Z$)=-1 ; \\
\> \[while] Status($Z$)$\neq$ 0 \[and] length($Z$) < $M_{pts}$, \\
\> \> \[for] $z \in Z$ such that Status($z$)==-1, \\
\> \> \> Compute $det(zI-A)$ and  $\Phi_{z}^{'}(0)$ ; \\
\> \> \> Status($z$) = 1 ; \\
\> \> \[end] \\
\> \> \[for] $z \in Z$ such that Status($z$)=1, \\
\> \> \> \[if] {\bf Condition (C)} not satisfied at $z$, \\
\> \> \> \> Generate $M$ points $\tilde{Z}$ as in (\ref{eq:ptsinsert}); \\
\> \> \> \> $Z$=$Z\cup \tilde{Z}$; Status($\tilde{Z}$)=-1; \\
\> \> \> \[elseif] {\bf Condition (B')} not satisfied at $z+h$ ; \\
\> \> \> \> $Z$=$Z\cup \{ z+h/2 \}$; Status($z+h/2$)=-1; \\
\> \> \> \[else] \\
\> \> \> \> Status($z$)=0 ; \\
\> \> \> \[end] \\
\> \> \[end] \\
\> \> \[if] no new points were inserted in $Z$ ; \\
\> \> \> \[for] $z \in Z$, \\
\> \> \> \> \[if] {\bf Condition (C)} is backwardly violated ; \\
\> \> \> \> \> $Z$=$Z\cup \{ z-h/2 \}$; Status($z-h/2$)=-1; \\
\> \> \> \[end] \\
\> \> \[end] \\
\> \[end] \\

\> Integral = $\sum_{z \in Z} \Arg (\Phi_z ^{'}(0))$ ; $neg$ = round(Integral/ $2 \pi$)  ;
\end{ualgorithm*}
\caption{\label{algo:eigcnt}Algorithm for counting the eigenvalues surrounded by $(\Gamma)$.}
\end{table}

%
\section{Numerical tests}
\label{sec:5}
The tests are run on a laptop Dell (Processor Intel Core i7-2620M CPU, clock: 2.70 GHz, RAM: 4 GB). The program {\tt eigencnt} is coded in Matlab.

In the following tests, we describe the performances of the algorithm for three real matrices chosen from the set Matrix Market \cite{Market}. The maximum inserted points in an interval is $M_{max}=10$. When $(\Gamma)$ is symmetric w.r.t. the real axis, only half of the integration is performed. The storage of the matrices is kept sparse (except for computing the spectra of the matrices of the two first examples).

\begin{example}[Matrix ODEP400A]
\label{ex:odep}
This matrix is a model eigenvalue problem coming from an ODE.
\end{example}
\begin{center}
\begin{tabular}{|c|c|c|c|}
\hline
$n$ & $\| A \|_1$ & Spectral radius & Spectrum included in \\ \hline
400 & 7 & 4.00 & [-4,4.38e-4]x[-0.01,0.01] \\ \hline
\end{tabular}
\end{center}
This matrix is of small order and its spectrum is displayed in Figure \ref{fig:spect-odep400a}.
\begin{figure}
\centering
\includegraphics[width=10cm]{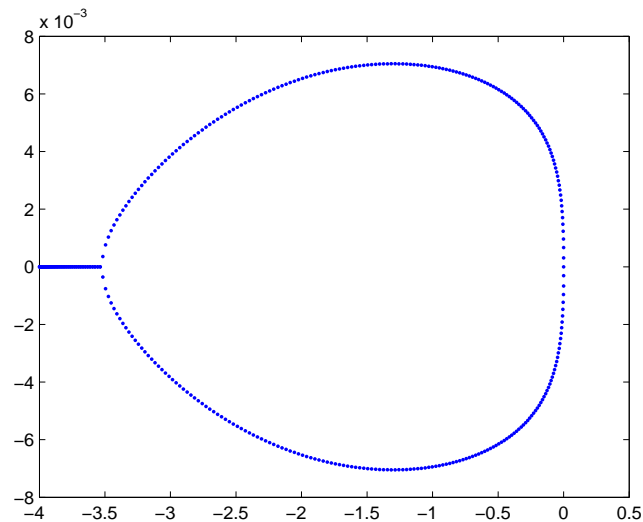}
\caption{\label{fig:spect-odep400a}Spectrum of the matrix of Example \ref{ex:odep}.}
\end{figure}

\begin{figure}[hbt]
\centering
\includegraphics[width=10cm]{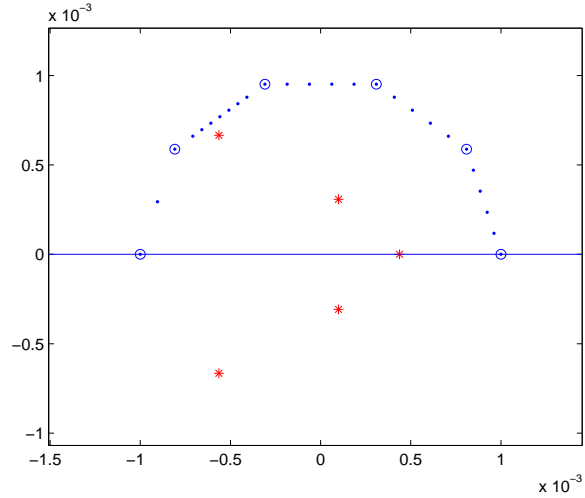}
\caption{\label{fig:right-odep400a} Example \ref{ex:odep}: First experiment on the right end of the spectrum.}
\end{figure}

The first experiment consists to focusing on the right part of the spectrum by defining a regular polygon of 10 vertices; the polygon is centered in the origin,  symmetric w.r.t. with the real axis as shown in figure \ref{fig:right-odep400a} (only its upper part is drawn), and of radius $R=10^{-3}$. Five eigenvalues were correctly found as surrounded by the polygon. Some statistics are displayed in the first line of Table \ref{tab:odep}.
\begin{table}[ht]
\caption{\label{tab:odep}Statistics for Example \ref{ex:odep}.}
\centering
\begin{tabular}{|c|c|c|c|}
\hline
 & Nb. of eigenvalues in $(\Gamma)$ & nb. of intervals & elapsed time \\ \hline
Exper. 1 & 5 & 25 & 5.6e-2 s \\ \hline
Exper. 2 & 89 &  1519 & 1.1 s \\ \hline
\end{tabular}
\end{table}

The second experiment focuses on the bifurcation between real and complex eigenvalues in the neighborhood of $-3.5$. In the box $[-4,-3.4]\times [-10^{-4}i,10^{-4}i]$, $89$ eigenvalues are counted (see the statistics in the second line of Table \ref{tab:odep}). The aspect ratio of the box is large. The refining process proceeds in 16 steps to produce 1519 intervals from the initial four. If the integral is computed by the relation (\ref{eq:integr}) at each step (hence even if the necessary conditions for correctness are not satisfied), it would only have been correct at the fifth step and after ; this corresponds to 825 intervals. This illustrates the loss in efficiency which is imposed by the constraint for a safe computation.

\begin{example}[Matrix TOLS2000]
\label{ex:tols}
This matrix comes from a stability analysis of a model of an airplane in flight.
\end{example}
\begin{center}
\begin{tabular}{|c|c|c|c|}
\hline
$n$ & $\| A \|_1$ & Spectral radius & Spectrum included in \\ \hline
2000 & 5.96 $\times 10^6$ &  r= 2.44 $\times 10^3$& [-750,0]x[-r,+r]  \\ \hline
\end{tabular}
\end{center}

\begin{figure}[ht]
\hspace{-1.5cm} ~
\begin{tabular}{cc}
\multicolumn{2}{c}{\includegraphics[width=10cm]{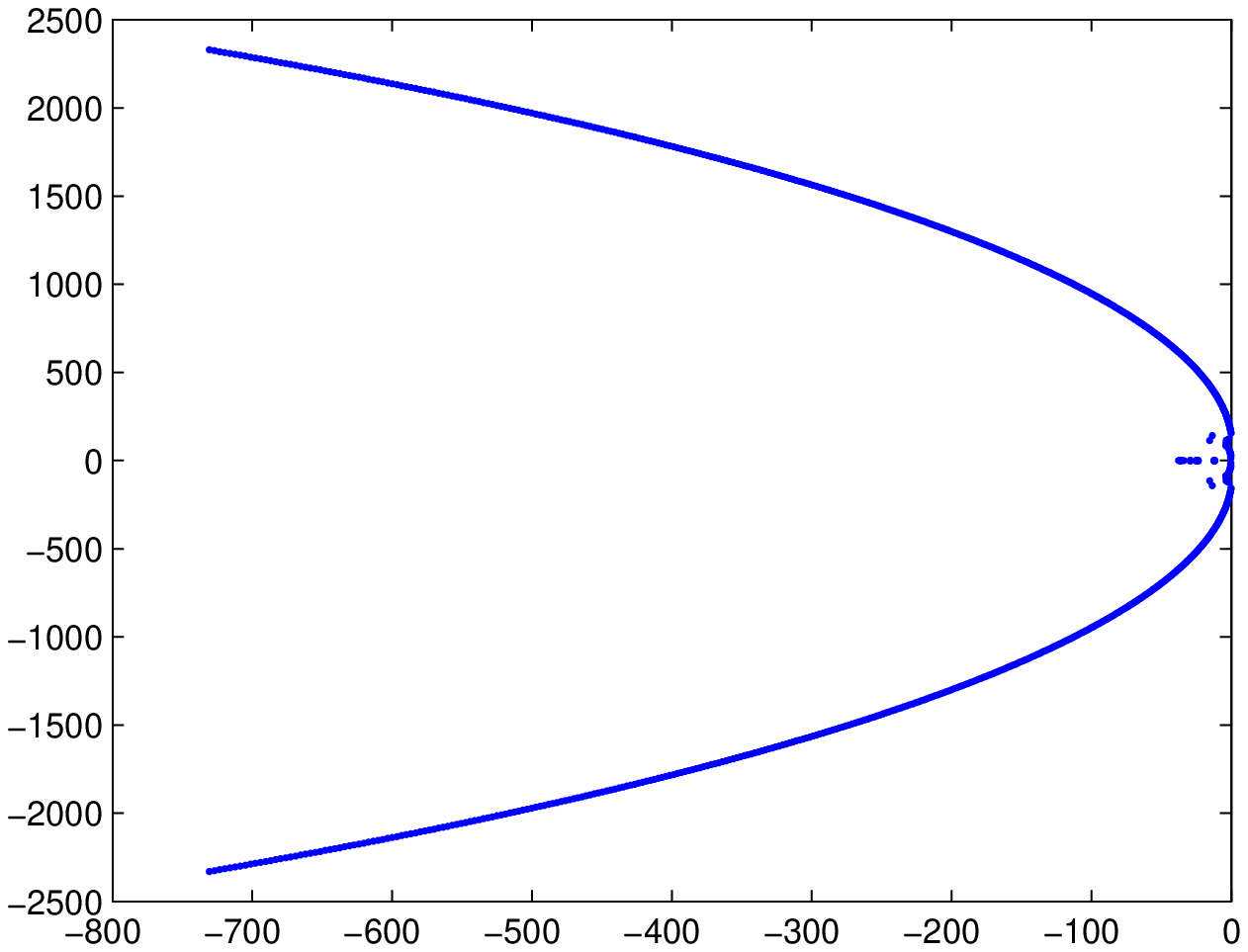}} \\ ~\\
\multicolumn{2}{c}{Entire spectrum of the matrix TOLS2000} \\ ~\\
\includegraphics[width=7cm]{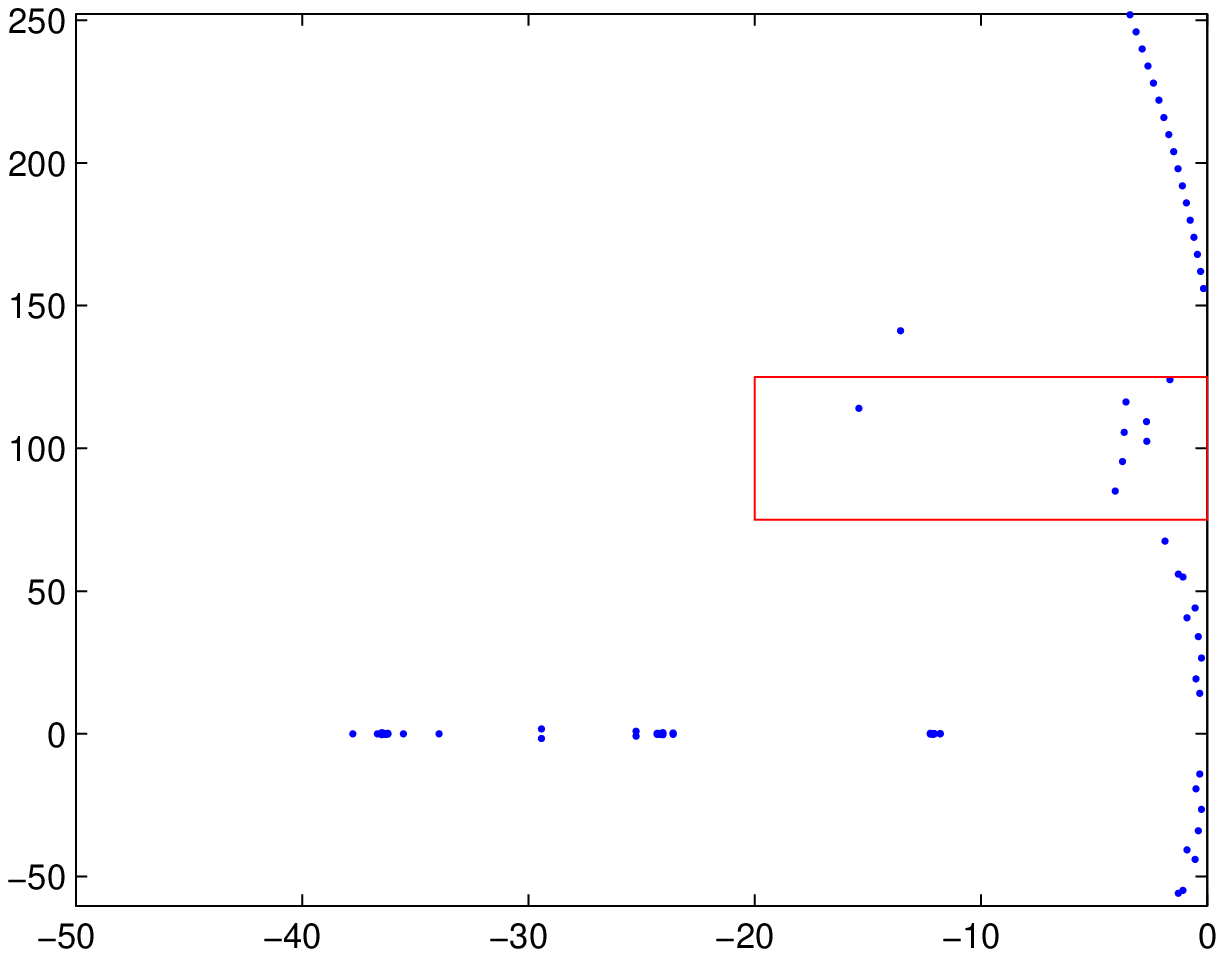} &
\includegraphics[width=7cm]{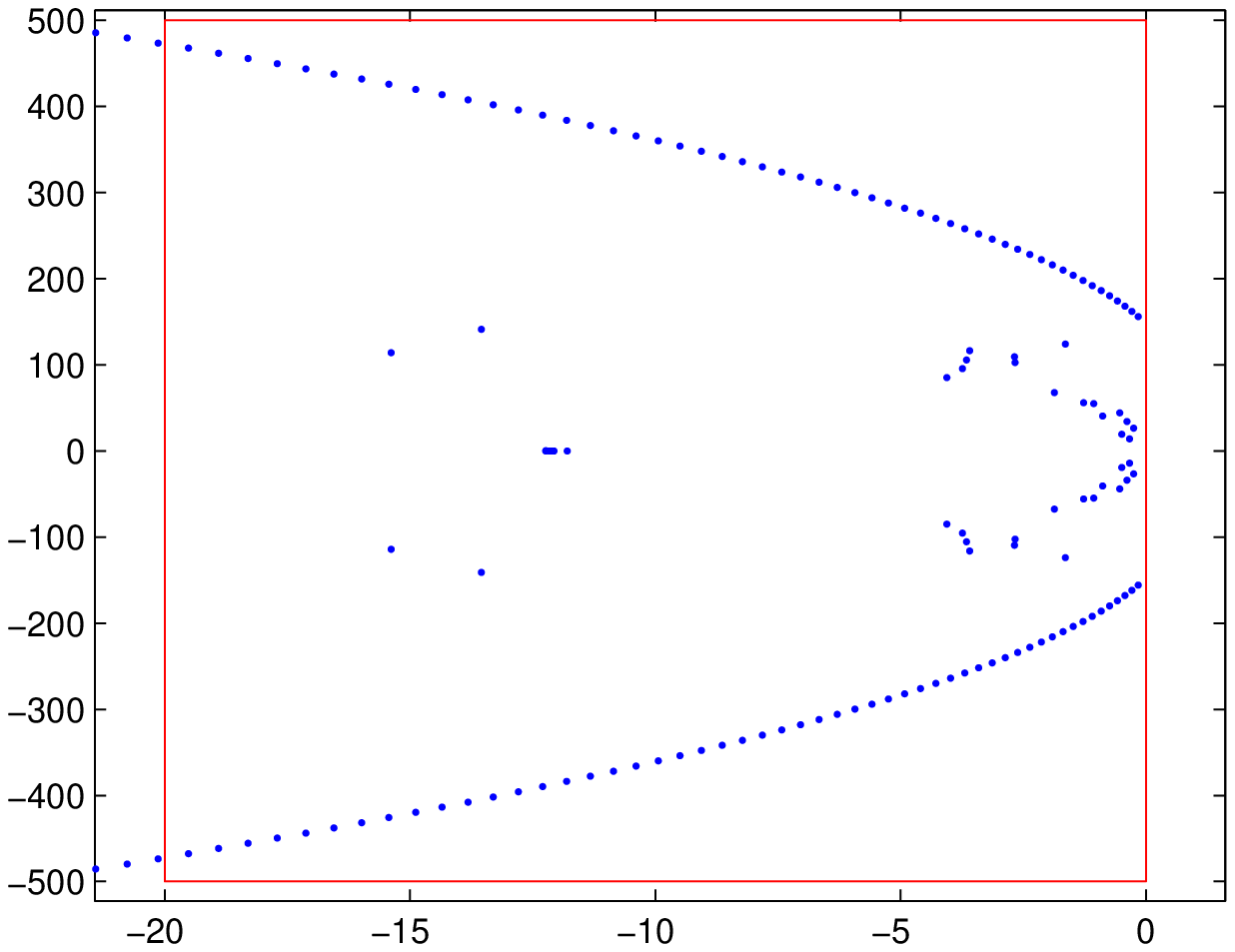} \\ ~& \\
{ Exper. 1}: Box=$[-20,0]\times [75i,125i]$ & { Exper. 2}: Box=$[-20,0]\times [-500i,500i]$
\end{tabular}
\caption{\label{fig:tols} Example \ref{ex:tols}: Spectrum of the matrix (up) and zooms on the two regions of experiments.}
\end{figure}

Two experiments consider the right part of the spectrum. A first box
$[-20,0]\times [75i,125i]$ is not symmetric w.r.t. the real axis. Therefore, the integration is not reduced. Eight eigenvalues are decounted. The second box $[-20,0]\times [-500i,500i]$ is symmetric w.r.t. the real axis but it includes 542 eigenvalues. The statistics are reported in Table \ref{tab:tols}. In Figure \ref{fig:tols}, the spectrum and two zooms on it are displayed.

\begin{table}[ht!]
\centering
\begin{tabular}{|c|c|c|c|}
\hline
 & Nb. of eigenvalues in $(\Gamma)$ & nb. of intervals & elapsed time \\ \hline
Exper. 1 & 8 &  2611 & 11.0 s \\ \hline
Exper. 2 & 542 & 15669 & 57.7 s \\ \hline
\end{tabular}
\caption{\label{tab:tols}Statistics for Example \ref{ex:tols}.}
\end{table}
\newpage
\begin{example}[Matrix E40R5000]
\label{ex:e40r5000}
This sparse matrix comes from modeling 2D fluid flow in a driven cavity, discretized on a $40\times 40$ grid and with a Reynolds number is $Re=5000$.
\end{example}
\begin{center}
\begin{tabular}{|c|c|c|c|}
\hline
$n$ & $\| A \|_1$ & Spectral radius & Spectrum included in \\ \hline
17281 &  1.21 $\times 10^2$ &  r=65.5 (*) & [0,20.2]x[-r,+r]  \\ \hline
\end{tabular}
\end{center}
(*) Estimated by the Matlab procedure {\tt eigs}.

This example shows the reliability of the proposed procedure.
Computing the 6 eigenvalues of largest real part with the Matlab procedure {\tt eigs} (it implements the ARPACK code) returns the eigenvalues $\{ 8.3713 \pm 64.653i, 8.8025 \pm 64.876i , 16.203 , 20.166 \}$. By increasing the number $p$ of requested eigenvalues, only a few of them converged: for instance for $p=20$, only the two rightmost were found. Increasing even further up to $p=100$, 14 eigenvalues were given back, including the already computed two rightmost and 12 additional ones with real parts belonging to the interval [12.2,12.9]. Therefore, the user is inclined to ask for the exact situation in this region. Defining the rectangle
$\Gamma=\Gamma_+ \cap \Gamma_-$ where $\Gamma_+ =(14,14+2i,12+2i,12)$ and where $\Gamma_-$  is the symmetric of $\Gamma_+$ w.r.t. the real axis, the procedure {\tt eigencnt} returns
\begin{table}[hbt]
\centering
\begin{tabular}{|c|c|c|}
\hline
Number of eigenvalues in $(\Gamma)$ & number of intervals & elapsed time \\ \hline
116 & 7986 & 54 h 42 mn \\ \hline
\end{tabular}
\caption{\label{tab:e40r5000}Statistics for Example \ref{ex:e40r5000}.}
\end{table}

Actually, the right number of eigenvalues was already given before the last refining step with $3994$ intervals.
Taking into account the result of the experiment, after several tries of shifts in {\tt eigs}, all the 116 eigenvalues surrounded by $(\Gamma)$ were obtained by requesting $p=200$ eigenvalues in the neighborhood of the shift $\sigma=13.5$ (elapsed time: 10.2s).

\section{Conclusion}
\label{sec:6}
In this paper, we have developed a reliable method for counting the eigenvalues in a region surrounded by a user-defined polygonal line. The main difficulty to tackle lies in the step control which must be used during the complex integration along the line. The method is reliable but it involves a high level of computation. This is the price to pay for the reliability. In forthcoming works, a parallel version of the method will be developed and implemented. The code involves a high potential for parallelism since most of the determinant computations are independent. A second level of parallelism is also investigated within the computation of a determinant for matrices arising in domain decompositions.

\bibliographystyle{plain}
\bibliography{biblioKP}
\newpage
\tableofcontents

\end{document}